\newtheorem{thm}{Theorem}
\newtheorem{rem}{Remark}
\newtheorem{lem}{Lemma}
\newtheorem{prop}{Proposition}
\newcommand{\leg}[2]{\genfrac{(}{)}{}{}{#1}{#2}} 
\newcommand{\Mod}[1]{\ (\mathrm{mod}\ #1)}
\newcommand{\Sk}[2]{S_{#1}(\Gamma_0(#2))}
\newcommand{\ZZ}{\mathbb{Z}}
\newcommand{\NN}{\mathbb{N}}
\newcommand{\QQ}{\mathbb{Q}}
\newcommand{\CC}{\mathbb{C}}
\begin{document}

\title{Cusp forms as $p$-adic limits}

\author{Michael Hanson \and Marie Jameson}

\maketitle

\begin{abstract}
    Ahlgren and Samart relate three cusp forms with complex multiplication to certain weakly holomorphic modular forms using $p$-adic bounds related to their Fourier coefficients. In these three examples, their result strengthens a theorem of Guerzhoy, Kent, and Ono which pairs certain CM forms with weakly holomorphic modular forms via $p$-adic limits. Ahlgren and Samart use only the theory of modular forms and Hecke operators, whereas Guerzhoy, Kent, and Ono use the theory of harmonic Maass forms. Here we extend Ahlgren and Samart's work to all cases where the cusp form space is one-dimensional and has trivial Nebentypus. Along the way, we obtain a duality result relating two families of weakly holomorphic modular forms that arise naturally in each case. 

\end{abstract}

\section{Introduction and statement of results} 

In \cite{Ahlgren-Samart}, S. Ahlgren and D. Samart considered the cusp form with complex multiplication (with $q := \exp(2\pi i z)$)
\[g(z) = \eta^2(4z)\eta^2(8z) = q-2q^5 - 3q^9 + \cdots \in S_2(\Gamma_0(32))\]
as well as the weakly holomorphic modular form
\[F(z) = -g(z)\frac{\eta^6(16z)}{\eta^2(8z)\eta^4(32z)} = \sum_{n\geq -1} C(n)q^n = -\frac{1}{q} + 2q^3 + q^7 + \dots \in M_2^{\infty}(\Gamma_0(32)),\]
whose only pole lies at the cusp $\infty$. Here, $\eta(z)$ is Dedekind's eta-function $\displaystyle \eta(z) := q^{1/24}\prod_{n=1}^\infty(1-q^n).$ Applying a theorem of P. Guerzhoy, Z. Kent, and K. Ono \cite{Guerzhoy-Kent-Ono}, one has that if $p\equiv 3\pmod{4}$ is a prime for which $p\nmid C(p),$ then as a $p$-adic limit, we have that
\[\lim_{m\rightarrow \infty}\frac{F\mid U(p^{2m+1})}{C(p^{2m+1})}=g.\]
However, Ahlgren and Samart were able to provide a strengthened result in this case, proving that for all primes $p\equiv 3\pmod{4}$ and integers $m\geq 0$, we have
\begin{align*}
v_p(C(p^{2m+1})) &= m\\
v_p\left(\frac{F\mid U(p^{2m+1})}{C(p^{2m+1})} - g\right) &\geq m+1,
\end{align*}
where $v_p(\cdot)$ denotes the $p$-adic valuation on $\ZZ\llbracket q\rrbracket.$

Furthermore, Ahlgren and Samart obtained analogous improved results for two other examples involving normalized cusp forms lying in $S_4(\Gamma_0(9))$ and $S_3(\Gamma_0(16),\chi),$ where $\chi$ is the non-trivial Dirichlet character modulo 4. In each of the three examples given in \cite{Ahlgren-Samart}, the relevant space of cusp forms is one-dimensional and the unique normalized cusp form $g$ in that space has complex multiplication.

Ahlgren and Samart mention that their approach would give similar results for a number of other spaces of modular forms. We resolve this claim in Theorem \ref{thm:main1} for all such one-dimensional spaces whose character is trivial.

\begin{thm}\label{thm:main1}
Suppose that $S_k(\Gamma_0(N))$ is one-dimensional and that the unique normalized cusp form
\[g = \sum_{n=1}^\infty a(n)q^n \in S_k(\Gamma_0(N))\]
has complex multiplication. There exists
\[F = \sum_{n=-1}^\infty C(n)q^n \in S_k^\infty(\Gamma_0(N))\]
such that for every odd prime $p$ which is inert in the field of complex multiplication for $g$ and every integer $m\geq 0$ we have that
\begin{align}
    v_p(C(p^{2m+1})) &= (k-1)m \label{Thm1.a} \\
    v_p\left( \frac{F\mid U(p^{2m+1})}{C(p^{2m+1})} - g \right) &\geq (k-1)(m+1). \label{Thm1.b}
\end{align}
\end{thm}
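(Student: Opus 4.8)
The strategy is to work with the canonical basis of $S_k^\infty(\Gamma_0(N))$ and exploit the CM structure of $g$ via the theory of Hecke operators at inert primes. First I would fix the setup: since $S_k(\Gamma_0(N))$ is one-dimensional, the relevant space of weakly holomorphic forms $S_k^\infty(\Gamma_0(N))$ (cusp forms except possibly a pole at $\infty$) has a distinguished basis $\{f_m\}_{m \geq -1}$ with $f_m = q^{-m} + O(q)$ when such forms exist, obtained by multiplying $g$ by suitable weight-$0$ weakly holomorphic forms with a pole at $\infty$ and integral Fourier coefficients (modeled on the factor $\eta^6(16z)/(\eta^2(8z)\eta^4(32z))$ in the motivating example). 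I would set $F = -f_1 = -q^{-1} + O(q)$, so that $C(-1) = -1$ and $C(n) \in \ZZ$ for all $n$. The key input is that for a prime $p$ inert in the CM field $K$ of $g$, the eigenvalue $a(p) = 0$, so that $g \mid U(p) = -p^{k-1} g\mid V(p)$ — equivalently $g\mid U(p^2) = -p^{k-1} g$ — which is the engine driving both valuation statements.

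Next I would establish the duality result advertised in the abstract: there is a second natural family of weakly holomorphic forms of weight $2-k$ on $\Gamma_0(N)$, say $\{g_m\}$, built from $g$ and the same eta-quotient data, and the pairing of Fourier coefficients satisfies a relation of the shape $C(n)$ (the $n$-th coefficient of $f_m$) equals $\pm$ (the $m$-th coefficient of $g_n$); this is the standard Borcherds/Zagier-type duality coming from the fact that the constant term of the product of a weight-$k$ and weight-$(2-k)$ weakly holomorphic form transforms as a weight-$2$ form and hence has vanishing "residue." Granting the duality, I can convert statements about $C(p^{2m+1})$ into statements about coefficients of the dual family, which interact cleanly with Hecke operators.

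The heart of the argument is an induction on $m$ for \eqref{Thm1.a} and \eqref{Thm1.b} simultaneously. Using the commutation of $U(p)$ with the modular structure, one writes $F \mid U(p^{2m+1})$ in terms of $F\mid U(p^{2m-1})$ via $U(p^2) = U(p)V(p)\cdot(\text{stuff}) $ together with the weight-$k$ relation $f \mid U(p)\, V(p) = f - (\text{Hecke operator } T(p))\, V(p)\, f + p^{k-1} f\mid V(p^2)$; because $g$ is a $U(p^2)$-eigenform with eigenvalue $-p^{k-1}$, the "error terms" pick up exactly one factor of $p^{k-1}$ per step. Concretely, I expect to show $F\mid U(p^{2m+1}) = \alpha_m g + (\text{lower-order weakly holomorphic piece})$ with $v_p(\alpha_m) = (k-1)m$ and the lower-order piece divisible by $p^{(k-1)(m+1)}$; reading off the coefficient of $q^1$ (using $a(1)=1$) gives $v_p(C(p^{2m+1})) = (k-1)m$, i.e. \eqref{Thm1.a}, and dividing through yields \eqref{Thm1.b}. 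The base case $m=0$ reduces to showing $v_p(C(p)) = 0$, i.e. $p \nmid C(p)$, which should follow from the duality together with the inertness hypothesis (the dual coefficient is essentially a CM value that is a $p$-adic unit).

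The main obstacle, I expect, is twofold: (i) proving the integrality and the precise pole structure of the basis $\{f_m\}$ in the generality of an arbitrary one-dimensional $S_k(\Gamma_0(N))$ with trivial character — the motivating examples all had convenient eta-quotient expressions, but in general one must argue abstractly that a weight-$0$ weakly holomorphic form on $\Gamma_0(N)$ with a pole only at $\infty$ and integral principal part exists, which requires controlling the cusps of $\Gamma_0(N)$ and possibly invoking that the only such $N, k$ are on a short explicit list; and (ii) pinning down the exact power of $p$ (equality, not just a lower bound) in \eqref{Thm1.a}, which forces one to track that no extra cancellation occurs — this is where the $U(p^2)$-eigenvalue being exactly $\pm p^{k-1}$ (a consequence of CM and of the Ramanujan bound forcing $a(p^2) = -p^{k-1}$ when $a(p)=0$) must be used sharply, rather than just up to a unit.
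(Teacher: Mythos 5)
Your skeleton matches the paper's strategy (reduce to the finite list of $(k,N)$, build canonical integral families in weights $k$ and $2-k$, prove Zagier duality via the vanishing constant term of a weight-$2$ product, then run a $U(p)$/$V(p)$ recursion), but the load-bearing step is missing. Your base case claims that $p\nmid C(p)$ ``should follow from the duality together with the inertness hypothesis (the dual coefficient is essentially a CM value that is a $p$-adic unit).'' That is not an argument: duality only gives $C(p)=A_p(1)$, the coefficient of $q$ in the weakly holomorphic weight-$(2-k)$ form $\phi_p$, and there is no a priori reason this integer is a $p$-adic unit --- it is not a CM value of anything. This nonvanishing is exactly the point where the theorem strengthens Guerzhoy--Kent--Ono, whose $p$-adic limit statement is conditional on $p\nmid C(p)$, and the paper devotes Proposition \ref{Cong 2} to it: assuming $p\mid C(p)$, one deduces $\Theta^{k-1}(\phi_p)\equiv 0\pmod p$, so $\phi_p$ is congruent to a series supported on exponents divisible by $p$; multiplying by the $p$-th power of a suitable holomorphic form and invoking Jochnowitz's filtration facts ($w_p(f\mid V(p))=p\,w_p(f)$ and $w_p(f)\equiv k\pmod{p-1}$) forces the existence mod $p$ of a low-weight holomorphic form with a prescribed leading term, which is then ruled out by inspecting an explicit basis --- carried out separately for $N=27,36,49$ and quoted from Ahlgren--Samart for $N=9,32$. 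Nothing in your sketch substitutes for this, and without it both the equality in \eqref{Thm1.a} and the normalization in \eqref{Thm1.b} collapse.

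The inductive step is also underpowered as stated. You assert that the ``lower-order weakly holomorphic piece'' in $F\mid U(p^{2m+1})=\alpha_m g+(\text{error})$ is divisible by $p^{(k-1)(m+1)}$ because each step contributes a factor $p^{k-1}$, but the weakly holomorphic forms that actually appear (such as $F_{p^j}$, or $F\mid U(p^a)\mid V(p^b)$) are integral with no intrinsic $p$-divisibility beyond the explicit Hecke factors, and $F$ is not a Hecke eigenform, so the $U(p^2)$-eigenvalue of $g$ alone does not control them. The paper's mechanism is the identity $F\mid T_k(p)=\Theta^{k-1}(\phi_p)$, which follows from one-dimensionality together with the duality inputs $A_p(-1)=a(p)=0$ and $A_p(1)=C(p)$; then $\Theta^{k-1}(\phi_p)\mid U(p^{2j})\equiv 0\pmod{p^{2j(k-1)}}$ for free, since $\Theta^{k-1}$ multiplies the $q^n$-coefficient by $n^{k-1}$. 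Iterating yields the sharp congruence $C(p^{2m+1})\equiv(-1)^m p^{(k-1)m}C(p)\pmod{p^{(k-1)(m+1)}}$, which with $p\nmid C(p)$ gives \eqref{Thm1.a}; the proof of \eqref{Thm1.b} additionally uses $C(p^{2m})=0$ for inert $p$ (a support property of $F$ your sketch never addresses). So: right architecture, but the two decisive ingredients --- the filtration-based proof that $p\nmid C(p)$ and the $\Theta^{k-1}(\phi_p)$ divisibility device --- are absent.
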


\begin{rem}\label{rem on p=2}
Here, we assume that $p$ is an odd prime since this theorem depends on Proposition \ref{Cong 2}, which may not hold when $p=2.$ However, Proposition \ref{Cong 2} and Theorem \ref{thm:main1} are both true for the space $S_2(\Gamma_0(27)).$ A modified version of Theorem \ref{thm:main1} also holds for $S_4(\Gamma_0(9)),$ as described in \cite[Theorem 4.1]{Ahlgren-Samart}.
\end{rem}

\begin{rem}
Work of R. Dicks \cite{Dicks} also establishes analogous results for weight 2 newforms with complex multiplication that can be expressed as eta-quotients.
\end{rem}

In order to prove this theorem, for each space $S_k(\Gamma_0(N))$ described above we will construct (see Proposition \ref{prop:1}) two families of modular forms with integer coefficients of the form
\begin{align}
    \phi_n &= q^{-n} + \sum_m A_n(m)q^m \in M_{2-k}^\infty(\Gamma_0(N)) \label{eq:phins}\\
    F_m &= -q^{-m} + \sum_n C_m(n)q^n \in S_{k}^\infty(\Gamma_0(N)), \label{eq:Fms}
\end{align}
where the indices $n, m$ are defined for appropriate sets of integers. In particular, if $k=2$ we consider integers $n\geq 2$ and $m\in \{-1\}\cup \NN$, and if $k>2$ then we consider integers $n\geq 2$ and $m\geq -1.$

These families of modular forms also satisfy a beautiful property known as Zagier duality.  For example, for the one-dimensional space $S_2(\Gamma_0(27))$ one can construct the Fourier expansions of a few $\phi_n$ (where, for convenience, we restrict only to the first few $n$ which are congruent to $2\pmod{3}$).
\begin{align*}
\phi_2 &= q^{-2} + q + 2q^{4} - q^{7} + q^{10} - q^{13} + O(q^{15})\\
\phi_5 &= q^{-5} + q + 2q^{4} + 7q^{7} + 8q^{10} - 10q^{13} + O(q^{15})\\
\phi_8 &= q^{-8} + 6q + 5q^{4} + 14q^{7} - 8q^{10} + 30q^{13} + O(q^{15})\\
\phi_{11} &= q^{-11} - 6q - 8q^{4} + 26q^{7} + 44q^{10} + 95q^{13} + O(q^{15})\\
\phi_{14} &= q^{-14} - 7q + 21q^{4} - 27q^{7} + 21q^{10} + 49q^{13} + O(q^{15})
\end{align*}
One can compare these expansions to those of the first few $F_m$ (where, again for convenience, $m$ is congruent to $1\pmod{3}$) to find that the coefficients which appear are exactly the same (although they are arranged differently).
\begin{align*}
F_1 &= -q^{-1} + q^{2} + q^{5} + 6q^{8} - 6q^{11} - 7q^{14} + O(q^{15})\\
F_4 &= -q^{-4} + 2q^{2} + 2q^{5} + 5q^{8} - 8q^{11} + 21q^{14} + O(q^{15})\\
F_7 &= -q^{-7} - q^{2} + 7q^{5} + 14q^{8} + 26q^{11} - 27q^{14} + O(q^{15})\\
F_{10} &= -q^{-10} + q^{2} + 8q^{5} - 8q^{8} + 44q^{11} + 21q^{14} + O(q^{15})\\
F_{13} &= -q^{-13} - q^{2} - 10q^{5} + 30q^{8} + 95q^{11} + 49q^{14} + O(q^{15})
\end{align*}
In fact, the coefficients that appear in the $q$-expansions of these two families are always the same.  In particular, we have the following result.

\begin{thm} \label{thm:main2}
For all integers $n, m$ as described above, we have that \[C_m(n) = A_n(m).\]
\end{thm}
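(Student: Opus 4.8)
The plan is to use the mechanism behind essentially every Zagier duality statement: the product of a form of weight $2-k$ and a form of weight $k$ has weight $2$, and a weight-$2$ weakly holomorphic modular form on $\Gamma_0(N)$ that is holomorphic away from the cusp $\infty$ and vanishes at every other cusp must have vanishing constant term at $\infty$.

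First I would fix admissible indices $n$ and $m$ and set $h := \phi_n F_m$. By \eqref{eq:phins} and \eqref{eq:Fms} we have $\phi_n \in M_{2-k}^\infty(\Gamma_0(N))$ and $F_m \in S_k^\infty(\Gamma_0(N))$, so $h$ is a weakly holomorphic modular form of weight $(2-k)+k = 2$ on $\Gamma_0(N)$. It is holomorphic on the upper half-plane (a product of two forms that are), it has a pole only at the cusp $\infty$ (both factors do), and it vanishes at every cusp $\ne\infty$, since there $\phi_n$ takes a finite value while $F_m$ vanishes. Consequently the differential $\omega := 2\pi i\, h(z)\,dz = h(z)\,\frac{dq}{q}$ descends to a meromorphic $1$-form on the compact Riemann surface $X_0(N)$ whose only pole is the point $\infty$, and whose residue there equals the coefficient of $q^0$ in the $q$-expansion of $h$ at $\infty$ (the cusp $\infty$ has width $1$ for $\Gamma_0(N)$). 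The residue theorem on $X_0(N)$ forces the sum of all residues of $\omega$ to vanish, and since $\infty$ is the only pole we conclude that the constant term of $h = \phi_n F_m$ is $0$.

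Next I would extract that constant term from the $q$-expansions \eqref{eq:phins} and \eqref{eq:Fms}. Collecting in $\phi_n F_m$ the monomials of total exponent $0$, the normalization recorded in Proposition \ref{prop:1} (in particular that the only negative-exponent term of $\phi_n$ other than $q^{-n}$ occurs in degree $-1$, and that the holomorphic part of $F_m$ beyond the term $-q^{-m}$ is supported in positive degrees $\ge 2$) guarantees that the only contributing pairs are the leading term $q^{-n}$ of $\phi_n$ times the $q^n$-coefficient $C_m(n)$ of $F_m$, and the $q^m$-coefficient $A_n(m)$ of $\phi_n$ times the leading term $-q^{-m}$ of $F_m$; all other products of a term of $\phi_n$ with a term of $F_m$ have nonzero total exponent. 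Hence the constant term of $\phi_n F_m$ equals $C_m(n) - A_n(m)$, and comparing with the previous paragraph yields $C_m(n) = A_n(m)$. The boundary cases (for instance $m=-1$, where $F_{-1}$ has no genuine pole and in fact equals $-g$) are handled by exactly the same computation once the precise shapes of $\phi_n$ and $F_m$ from Proposition \ref{prop:1} are inserted.

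The main obstacle is cusp bookkeeping rather than anything deep: one must check carefully that $\phi_n F_m$ is genuinely holomorphic at every cusp other than $\infty$ — this is precisely why it matters that the $F_m$ were constructed in $S_k^\infty(\Gamma_0(N))$ rather than merely in $M_k^\infty(\Gamma_0(N))$, so that no stray residue contributions appear away from $\infty$ — and that the principal parts, intermediate coefficients, and constant terms of $\phi_n$ and $F_m$ are exactly as in \eqref{eq:phins} and \eqref{eq:Fms}. Granting those structural facts from Proposition \ref{prop:1}, the proof reduces to the residue theorem applied to a meromorphic differential on $X_0(N)$.
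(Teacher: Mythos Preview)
Your argument is correct and is essentially the paper's own proof: form $\phi_n F_m \in S_2^\infty(\Gamma_0(N))$, invoke the vanishing of its constant term (Lemma~\ref{lem:constanttermsvanish}, i.e.\ the residue theorem on $X_0(N)$), and read off $C_m(n)-A_n(m)=0$. The only difference is that you spell out the residue-theorem justification of Lemma~\ref{lem:constanttermsvanish} explicitly rather than citing it.
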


After giving some background in Section \ref{Background}, we show in Section \ref{k, N list} that there are only finitely many one-dimensional cusp form spaces with trivial Nebentypus (Proposition \ref{prop:finitelymany}). In Section \ref{sec:phiandF} we build and study the $\phi_{n}$ and $F_{m}$ necessary for proving Theorem \ref{thm:main1}, and we also prove Theorem \ref{thm:main2}. These families then provide congruences for the coefficients $C(n)$ of $F$ in Section \ref{sec:congs}. Finally, we prove Theorem \ref{thm:main1} in Section \ref{sec:main1}. 

\subsection*{Acknowledgments}
The authors would like to thank Larry Rolen and Ian Wagner for helpful conversations related to this work.


\section{Background} \label{Background}

We review some basic notation and facts from the theory of modular forms (see, for example, \cite{Diamond-Shurman, Ono}). Let $f:\mathbb{H}\rightarrow \CC$ be a function on the upper half-plane. For a matrix $\gamma = \begin{psmallmatrix} a & b \\ c & d \end{psmallmatrix} \in \mathrm{SL}_{2}(\ZZ)$ and integer $k$, let 
\[f(z) \mid_{k} \gamma := (cz + d)^{-k}f(\gamma z).\]
If $N \in \NN$ and $k$ is a positive even integer, let $M_{k}^{!}(\Gamma_{0}(N))$ be the $\CC$-vector space of functions $f$ which are holomorphic on the upper half plane and satisfy $f \mid_{k} \begin{psmallmatrix} a & b \\ c & d \end{psmallmatrix} = f$ for all $\begin{psmallmatrix} a & b \\ c & d \end{psmallmatrix} \in \Gamma_{0}(N).$ Let $M_{k}(\Gamma_{0}(N))$ be the space of such functions which are also holomorphic at all of the cusps of $\Gamma_{0}(N)$. Let $S_{k}(\Gamma_{0}(N)) \subset M_{k}(\Gamma_{0}(N))$ be the subspace consisting of those forms which vanish at all of the cusps. We are particularly interested in the subspace of weakly holomorphic modular forms 
\[M_{k}^{\infty}(\Gamma_{0}(N)) := \{f \in M_{k}^{!}(\Gamma_{0}(N)) : f \text{ is holomorphic at every cusp except possibly } \infty\},\]
and its cuspidal subspace
\[S_{k}^{\infty}(\Gamma_{0}(N)) := \{f \in M_{k}^{!}(\Gamma_{0}(N)) : f \text{ vanishes at every cusp except possibly }\infty\}.\]

Each $f \in M_{k}^{!}(\Gamma_{0}(N))$ has a Fourier expansion at infinity; for $z\in \mathbb{H}$ and $q := \exp(2\pi iz)$ we have \[f(z) = \sum_{n \gg -\infty} a(n)q^{n}\] for some coefficients $a(n)\in\CC$. 

Define $\Theta := \frac{1}{2\pi i}\frac{d}{dz} = q\frac{d}{dq}$. It is well-known that
    \[\Theta^{k-1}: M_{2-k}^{\infty}(\Gamma_{0}(N), \chi) \rightarrow S_{k}^{\infty}(\Gamma_{0}(N), \chi).\]
For every integer $m > 0$, the $U$ and $V$-operators are defined on Fourier expansions by
    \begin{align*}
    &\sum a(n)q^{n} \mid U(m) := \sum a(mn)q^{n} \\
    &\sum a(n)q^{n} \mid V(m) := \sum a(n)q^{mn}.
    \end{align*}
We also let $T_{k}(m)$ be the usual Hecke operator on $M_{k}^{!}(\Gamma_{0}(N))$, which is given by (see, for example, \cite{Diamond-Shurman})
\[\sum a(n)q^n \mid T_{k}(m) = \sum_{n}\left(\sum_{d\mid (m,n)}d^{k-1}a(mn/d^2)\right) q^n.\]
In particular, for $f=\sum a(n)q^n$ we have 
\[f \mid T_{k}(p^{n}) = \sum_{j=0}^{n} p^{(k-1)j} f \mid U(p^{n-j}) \mid V(p^{j}).\]
It is well-known that if $(m, N)=1$ then
\[T_{k}(m) : M_{k}^{\infty}(\Gamma_{0}(N)) \rightarrow M_{k}^{\infty}(\Gamma_{0}(N)),\]
and in fact $T_{k}(m)$ also preserves the cusp form subspace $S^{\infty}_{k}(\Gamma_{0}(N))$.

We restrict our attention to the spaces $S_{k}(\Gamma_{0}(N))$ which are one-dimensional and spanned by a normalized cusp form $g$ having complex multiplication by a quadratic field $K$ with fundamental discriminant $D < 0$. This essentially means that the coefficients of $g$ are supported on those $n \in \ZZ$ for which $\leg{D}{n} = 1$. For a more detailed account of CM forms see, for example, \cite[Section 1.2.2]{Ono} or \cite[Section 5]{Bruinier-Ono-Rhoades}. 

Finally, we will need some facts about filtrations. If $p \nmid 6N$ and $k \geq 0$, let $M_{k}^{(p)}(\Gamma_{0}(N)) \subset M_{k}(\Gamma_{0}(N))$ be the space of forms whose coefficients are $p$-integral rational numbers. If $f \in M_{k}^{(p)}(\Gamma_{0}(N))$, define the filtration
\[w_{p}(f) :=\inf\{k' : f\equiv g \Mod{p} \text{ for some } g \in M_{k'}^{(p)}(\Gamma_{0}(N))\}.\]
The following facts can be found in Section 1 of \cite{Jochnowitz}.

\begin{prop} \label{filt}
If $f \in M_{k}^{(p)}(\Gamma_{0}(N))$ and $w_{p}(f) \neq -\infty$, then $w_{p}(f) \equiv k \Mod{p-1}$. Moreover, $w_{p}(f \mid V(p)) = pw_{p}(f)$.
\end{prop}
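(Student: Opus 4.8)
The plan is to run the classical theory of mod $p$ modular forms (Serre, Swinnerton--Dyer, Katz, and, as cited, Jochnowitz \cite{Jochnowitz}). The central tool is the Hasse invariant: since $p \nmid 6$, the weight $p-1$ Eisenstein series $E_{p-1}$ for $\mathrm{SL}_2(\ZZ)$ satisfies $E_{p-1} \equiv 1 \pmod p$, and since $p \nmid N$ the curve $X_0(N)$ has good reduction at $p$, so Katz's geometric theory of mod $p$ modular forms of level $\Gamma_0(N)$ applies. Two facts from that theory enter: first, the reduction map from $\bigoplus_{k \geq 0} M_k^{(p)}(\Gamma_0(N)) \otimes \mathbb{F}_p$ onto the ring of mod $p$ modular forms has kernel the ideal generated by $E_{p-1} - 1$; and second, the behaviour of $\Theta$ on filtrations, equivalently of the Cartier/Frobenius operator on the special fibre.

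For the congruence $w_p(f) \equiv k \pmod{p-1}$, set $k' = w_p(f) \neq -\infty$ and choose $g \in M_{k'}^{(p)}(\Gamma_0(N))$ with $\bar f = \bar g \neq 0$ in $\mathbb{F}_p \llbracket q \rrbracket$. Because passing to the quotient by $E_{p-1} - 1$ identifies weights that differ by a multiple of $p-1$, the ring of mod $p$ modular forms is graded by $\ZZ/(p-1)\ZZ$, and a nonzero element arising as the reduction of a single form of pure weight $k$ has a well-defined degree $k \bmod (p-1)$. Applying this to $\bar f$, which is the reduction of $f$ (weight $k$) and also of $g$ (weight $k'$), forces $k \equiv k' \pmod{p-1}$.

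For $w_p(f \mid V(p)) = p\, w_p(f)$, I would first move the problem to level $N$. In $\mathbb{F}_p \llbracket q \rrbracket$ one has the Frobenius identity $\bigl( \sum a(n) q^n \bigr)^p \equiv \sum a(n) q^{np} \pmod p$ (using $a(n)^p \equiv a(n)$ for $p$-integral $a(n)$), so $f \mid V(p) \equiv f^p \pmod p$ with $f^p \in M_{pk}^{(p)}(\Gamma_0(N))$; hence $w_p(f \mid V(p)) = w_p(f^p)$ and it is enough to show $w_p(f^p) = p\, w_p(f)$. With $k' = w_p(f)$ and a witness $g$ as above, $f^p \equiv g^p \in M_{pk'}^{(p)}(\Gamma_0(N))$ gives $w_p(f^p) \leq pk'$. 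Conversely, $\overline{f^p} = \bar g \mid V(p)$ has $q$-expansion in $\mathbb{F}_p \llbracket q^p \rrbracket$, equivalently $\Theta(f^p) \equiv 0 \pmod p$; by Katz's theory (the Cartier operator on $X_0(N)_{\mathbb{F}_p}$, or the analysis of $\Theta$ on filtrations divisible by $p$), a mod $p$ modular form of level $\Gamma_0(N)$ whose $q$-expansion is a power series in $q^p$ is the $p$-th power of a mod $p$ modular form of filtration $1/p$ times its own. Its $p$-th root here is $\bar g$, so $w_p(f^p) = p\, w_p(g) = pk'$, completing the equality.

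I expect the reverse inequality in the second part to be the crux: one needs that annihilation by $\Theta$ (equivalently, $q$-expansion supported on exponents divisible by $p$) forces the filtration to be divisible by $p$ and that the $p$-th root attains filtration $w_p(f^p)/p$. This is precisely where the geometric input --- the Hasse invariant together with the Cartier/Frobenius operator on the mod $p$ modular curve --- cannot be bypassed. The first assertion and the bound $w_p(f^p) \leq p\, w_p(f)$, by contrast, are formal consequences of the structure of the graded ring of mod $p$ modular forms.
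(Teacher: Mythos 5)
The paper itself offers no argument for this proposition: it is quoted directly from Section 1 of \cite{Jochnowitz}, so the only ``proof'' in the paper is that citation. Your sketch reconstructs the standard argument that underlies the cited facts, and it is correct at the level of a sketch. The congruence $w_{p}(f)\equiv k\Mod{p-1}$ does follow, as you say, from the structure theorem for the ring of mod $p$ modular forms of level $\Gamma_0(N)$ with $p\nmid 6N$ (kernel of reduction generated by $E_{p-1}-1$, hence a $\ZZ/(p-1)\ZZ$-grading); note that the ``well-defined degree'' you invoke is essentially a restatement of the theorem that forms of weights incongruent mod $p-1$ cannot have the same nonzero reduction, so you are quoting rather than deriving it, which is fair. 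For the second assertion, your reduction is clean: $w_{p}(f\mid V(p))=w_{p}(f^{p})$ since the filtration depends only on the mod $p$ $q$-expansion and $f\mid V(p)\equiv f^{p}\Mod{p}$, the inequality $w_{p}(f^{p})\le p\,w_{p}(f)$ is formal, and you correctly isolate the crux as the converse statement that a mod $p$ form with $q$-expansion in $\mathbb{F}_p\llbracket q^{p}\rrbracket$ is a $p$-th power whose filtration is exactly $1/p$ of the original --- this is precisely the Serre--Swinnerton-Dyer--Katz input (Cartier operator/Hasse invariant on $X_0(N)_{\mathbb{F}_p}$) on which Jochnowitz's Section 1 rests. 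Since you quote that input rather than prove it, your write-up sits at essentially the same level of rigor as the paper's citation, but it has the merit of making the two geometric ingredients, and the role of the hypothesis $p\nmid 6N$, explicit.
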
 

\section{Reducing to a finite list of $(k,N)$} \label{k, N list}

First, we will describe the set of all pairs $(k,N)$ where $S_k(\Gamma_0(N))$ is one-dimensional and spanned by a cusp form with complex multiplication. First, we have that (as in, for example, \cite[Theorem 3.5.1]{Diamond-Shurman})
\[\dim(S_k(\Gamma_0(N))) = \begin{cases} (k-1)(g_N-1) + \lfloor\frac{k}{4}\rfloor \varepsilon_2 + \lfloor\frac{k}{3}\rfloor \varepsilon_3 +  (\frac{k}{2}-1)\varepsilon_\infty & \text{if } k\geq 4\\ g_N & \text{if }k=2\\ 0 & \text{if } k\leq 0\end{cases},\]
where $g_N$ is the genus of $X_0(N),$ $\varepsilon_2$ is the number of elliptic points with period 2, $\varepsilon_3$ is the number of elliptic points with period 3, and $\varepsilon_\infty$ is the number of cusps. So, to find the set of all one-dimensional spaces $S_k(\Gamma_0(N)),$ we need only consider the levels $N$ for which $g_N\leq 1.$

The number of congruence subgroups of a fixed genus is known to be finite, and work of D.A. Cox and W.R. Parry gives explicit bounds on the level (see \cite{Cox-Parry, Cummins-Pauli}), proving that
\[N\leq \begin{cases} 168 & \text{if }g_N=0\\ 12g_{N}+\frac{1}{2}(13\sqrt{48g_N+121})+145) & \text{if }g_N\geq 1\end{cases}.\]
Thus, we need only consider $(k,N)$ where $N \leq 12+\frac{1}{2}(13\sqrt{48+121})+145) = 241.5.$ Finally, after a short Sage calculation, it follows that the only pairs $(k, N)$ such that $S_k(\Gamma_0(N))$ is one-dimensional and spanned by a cusp form with complex multiplication are
\[(2, 27), (2, 32), (2, 36), (2, 49), \text{and } (4, 9).\]
This proves the following proposition.

\begin{prop} \label{prop:finitelymany}
The only one-dimensional spaces $S_k(\Gamma_0(N))$ which are spanned by a cusp form with complex multiplication are
\[\Sk{2}{27}, \Sk{2}{32}, \Sk{2}{36}, \Sk{2}{49}\text{and } \Sk{4}{9}.\]
\end{prop}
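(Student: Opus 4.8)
The plan is to carry out exactly the finite search sketched in the paragraphs preceding the statement, but to organize it so that the reader can see it terminates. First I would recall the dimension formula for $S_k(\Gamma_0(N))$ quoted above and observe that for it to equal $1$ we need either $k=2$ and $g_N=1$, or $k\geq 4$ and the combination $(k-1)(g_N-1)+\lfloor k/4\rfloor\varepsilon_2+\lfloor k/3\rfloor\varepsilon_3+(k/2-1)\varepsilon_\infty$ to equal $1$; in the latter case, since $\varepsilon_2,\varepsilon_3,\varepsilon_\infty\geq 1$ and the coefficients $\lfloor k/4\rfloor,\lfloor k/3\rfloor,k/2-1$ all grow with $k$, one gets a crude upper bound on $k$ (indeed $k<12$ suffices once $g_N\geq 1$, and one handles $g_N=0$ separately by noting the formula forces $k$ small there too). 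So the weight is bounded a priori.

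Next I would bound the level. The key input is that $g_N\leq 1$ is forced (if $g_N\geq 2$ then already $(k-1)(g_N-1)\geq 2$ for $k\geq 4$ and $g_N\geq 2$ for $k=2$), together with the Cox--Parry bound on the level of a congruence subgroup of fixed genus, which gives $N\leq 168$ when $g_N=0$ and $N\leq 12g_N+\tfrac12(13\sqrt{48g_N+121}+145)$ when $g_N\geq 1$; plugging in $g_N\leq 1$ yields $N\leq 241$. Thus only finitely many pairs $(k,N)$ can occur, and each can be checked directly.

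Finally I would invoke a finite computation (a short Sage check, as the authors do): for each of the finitely many $(k,N)$ with $N\leq 241$ and $k$ in the bounded range, compute $\dim S_k(\Gamma_0(N))$, keep the one-dimensional ones, and among those test whether the normalized Hecke eigenform spanning the space has complex multiplication (equivalently, whether its $L$-function is that of a Hecke character of an imaginary quadratic field, which in practice is detected by the coefficients $a(n)$ vanishing for $n$ inert in some quadratic field of discriminant dividing $N$ or $N$ times a small factor). The output is the list $(2,27),(2,32),(2,36),(2,49),(4,9)$, which is the assertion.

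The main obstacle is not conceptual but one of bookkeeping and trust: the argument genuinely relies on a finite but non-trivial machine computation over several hundred levels, so the real work is making the reduction airtight enough that the reader believes the search is exhaustive — in particular, pinning down the weight bound cleanly (the $k=2$ versus $k\geq 4$ split, and within $k\geq 4$ the elementary estimate forcing small $k$) and citing the Cox--Parry/Cummins--Pauli bounds correctly for the level. Once those two finiteness bounds are in place, the remaining verification is routine and the proposition follows.
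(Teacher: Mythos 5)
Your proposal is correct and follows essentially the same route as the paper: the dimension formula forces $g_N\leq 1$, the Cox--Parry bound then gives $N\leq 241$, and a finite Sage computation over the remaining pairs (checking one-dimensionality and complex multiplication of the spanning eigenform) produces the list $(2,27),(2,32),(2,36),(2,49),(4,9)$. One small caveat: your assertion that $\varepsilon_2,\varepsilon_3\geq 1$ is false for many levels (e.g.\ $\varepsilon_2=0$ when $4\mid N$), but this does not harm the argument, since for each fixed $N$ the dimension formula grows in $k$, so the range of weights to be checked per level is finite in any case, which is all the exhaustive search requires.
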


\begin{rem}
In particular, there are only five normalized cusp forms $g\in S_k(\Gamma_0(N))$ that have complex multiplication and $\dim(S_k(\Gamma_0(N)))=1.$ Thus, in order to prove Theorem \ref{thm:main1}, it suffices to consider these five cases.  These five cusp forms are listed in Table \ref{tab:prop1}, together with their LMFDB labels and field $K=\QQ(\sqrt{D})$ of complex multiplication (where $D<0$ is the fundamental discriminant of $K$) \cite{LMFDB}.
\end{rem}

\begin{table}[h]
\centering
\begin{tabular}{|c|c|c|c|} \hline
$(k,N)$ & $g = \sum a(n)q^n \in S_k(\Gamma_0(N))$ & LMFDB & $K = \QQ(\sqrt{D})$\\ \hline \hline
$(2,27)$ & $g = \eta(3z)^2\eta(9z)^2 = q - 2q^{4} - q^{7} + \cdots $ & 27.2.a.a & $\QQ(\sqrt{-3})$\\
$(2,32)$ & $g = \eta(4z)^2\eta(8z)^2 = q - 2q^{5} - 3q^{9} + \cdots $ & 32.2.a.a & $\QQ(\sqrt{-4})$\\
$(2,36)$ & $g = \eta(6z)^4 = q - 4q^{7} + \cdots $ & 36.2.a.a & $\QQ(\sqrt{-3})$\\
$(2,49)$ & $g = q + q^{2} - q^{4} - 3q^{8} - 3q^{9} + \cdots$ & 49.2.a.a & $\QQ(\sqrt{-7})$\\
$(4,9)$ & $g = \eta(3z)^8 = q - 8q^{4} + 20q^{7} + \cdots$ & 9.4.a.a & $\QQ(\sqrt{-3})$\\
\hline \end{tabular}
\caption{Cusp forms $g$ with CM that span $S_k(\Gamma_0(N))$}
\label{tab:prop1}
\end{table}

\begin{rem}
Note that S. Ahlgren and D. Samart \cite{Ahlgren-Samart} proved Theorem \ref{thm:main1} for $g\in \Sk{2}{32}$ and $g\in \Sk{4}{9}.$
\end{rem}

\section{Defining $\phi_n$ and $F_m$} \label{sec:phiandF}

First, we recall the following well-known fact (see, for example, the proof of Corollary 2.4 of \cite{El-Guindy-Ono} or Lemma 2.1 of \cite{JenkinsMolnar19}).

\begin{lem}\label{lem:constanttermsvanish}
If $f\in S_2^\infty(\Gamma_0(N))$ then the constant term of $f$ must vanish.
\end{lem}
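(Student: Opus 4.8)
The plan is to reinterpret $f$ as a meromorphic differential form on the modular curve $X_0(N)$ and invoke the residue theorem. Since $f$ has weight $2$ and is $\Gamma_0(N)$-invariant, the differential $\omega := f(z)\,dz$ satisfies $\omega = \omega \circ \gamma$ for all $\gamma \in \Gamma_0(N)$ (this is exactly the weight-$2$ transformation law together with $d(\gamma z) = (cz+d)^{-2}\,dz$), so $\omega$ descends to a meromorphic $1$-form on the compact Riemann surface $X_0(N)$. Because $f$ is holomorphic on all of $\mathbb{H}$, the form $\omega$ is holomorphic at every point of $\Gamma_0(N)\backslash\mathbb{H}$: at an elliptic point of order $e$, the local parameter downstairs is $t^e$ (with $t$ a holomorphic coordinate upstairs), and $\Gamma_0(N)$-invariance of $\omega$ forces $f$ to vanish to order $e-1$ in $t$, which precisely cancels the branching and leaves $\omega$ holomorphic. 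At each cusp $\mathfrak{a}\neq\infty$, the hypothesis $f\in S_2^\infty(\Gamma_0(N))$ says $f$ vanishes at $\mathfrak{a}$; expanding $f$ in the local parameter $q_{\mathfrak{a}}$ at $\mathfrak{a}$ and using $dz = \tfrac{h}{2\pi i}\tfrac{dq_{\mathfrak{a}}}{q_{\mathfrak{a}}}$ (with $h$ the width) shows $\omega$ is holomorphic there too. Hence the only pole of $\omega$ on $X_0(N)$ can lie at the cusp $\infty$.

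Now apply the residue theorem on the compact Riemann surface $X_0(N)$: the sum of the residues of $\omega$ over all points of $X_0(N)$ is $0$. Since $\omega$ is holomorphic away from $\infty$, this forces $\mathrm{Res}_{\infty}(\omega) = 0$. To evaluate this residue, write $f = \sum_{n\gg-\infty} a(n)q^n$ with $q = e^{2\pi i z}$; the cusp $\infty$ has width $1$, so $q$ is a local parameter there, $dz = \tfrac{1}{2\pi i}\tfrac{dq}{q}$, and
\[
\omega = f(z)\,dz = \frac{1}{2\pi i}\left(\sum_{n\gg-\infty} a(n) q^{n-1}\right) dq .
\]
The residue of $\omega$ at $\infty$ is the coefficient of $q^{-1}\,dq$, namely $\tfrac{a(0)}{2\pi i}$. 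Combining with $\mathrm{Res}_{\infty}(\omega) = 0$ gives $a(0) = 0$, which is the assertion that the constant term of $f$ vanishes.

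The one point requiring genuine care — the main obstacle — is the local analysis verifying that $\omega$ extends to an honest meromorphic form on the compactification $X_0(N)$ with no spurious poles at the elliptic points or at the cusps other than $\infty$; everything else is a direct application of the residue theorem. This verification is entirely standard (it is the same computation underlying the identification of $S_2(\Gamma_0(N))$ with the holomorphic differentials on $X_0(N)$), so we will only sketch it. Alternatively, one may avoid Riemann-surface language and argue directly: integrate $f(z)\,dz$ around the boundary of a fundamental domain for $\Gamma_0(N)$ truncated away from the cusps, use the weight-$2$ transformation law to cancel contributions from $\Gamma_0(N)$-equivalent boundary edges, use the vanishing of $f$ at the cusps $\neq\infty$ to kill those truncation pieces, and observe that the remaining contour integral around $\infty$ equals (a nonzero multiple of) $a(0)$, which must therefore be $0$.
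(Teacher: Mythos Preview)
Your argument is correct: interpreting a weight-$2$ form as a meromorphic differential on $X_0(N)$ and invoking the residue theorem is the standard way to see that the constant term vanishes, and your local checks at elliptic points and cusps are accurate.

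As for comparison, the paper does not actually prove this lemma; it simply records it as a well-known fact and points to the proof of Corollary~2.4 in El-Guindy--Ono and to Lemma~2.1 of Jenkins--Molnar. The argument found in those references is precisely the residue-theorem argument you give, so your proof is not a different route but rather a fleshed-out version of what the paper cites. Your alternative sketch via integrating around a truncated fundamental domain is also standard and equivalent.
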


For each pair $(k,N)$ identified in Proposition \ref{prop:finitelymany}, we can now define the two families of modular forms described in equations \eqref{eq:phins} and \eqref{eq:Fms}.

\begin{prop}\label{prop:1}
For each space $\Sk{k}{N}$ identified in Proposition \ref{prop:finitelymany}, we have the following.
\begin{enumerate}[(a)]
\item For all integers $n\geq 2$ there exists $\phi_n \in M_{2-k}^\infty(\Gamma_0(N))\cap \ZZ(\!(q)\!)$ of the form
\[\phi_n = q^{-n} + A_n(-1)q^{-1} + \sum_{m=1}^\infty A_n(m)q^m\] if $k=2$ and of the form
\[\phi_n = q^{-n} + \sum_{m=-1}^\infty A_n(m)q^m\] if $k=4.$
\item For $m=-1$ and for all integers $m\geq 1$ there exists a unique $F_m \in S_{k}^\infty(\Gamma_0(N))\cap \ZZ(\!(q)\!)$ of the form
\[F_m = -q^{-m} + \sum_{n=2}^\infty C_m(n)q^n.\]
If $k=4,$ then a unique $F_m$ of the given form exists for all integers $m\geq -1.$
\item Define $\displaystyle F := F_{1} = \sum_{n=-1}^\infty C(n)q^n$. If $p$ is prime such that $p \nmid N$, then we have for each $n \geq 0$ that 
\[F \mid T_k(p^{n}) = p^{(k-1)n}F_{p^{n}} + C(p^{n})g.\]
\end{enumerate}
\end{prop}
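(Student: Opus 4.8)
The plan is to establish (a) and (b) by a dimension count on $X_0(N)$ via Riemann--Roch, and then to deduce (c) by a direct manipulation of Fourier expansions under Hecke operators.

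For (a) and (b), fix one of the five pairs $(k,N)$ from Proposition~\ref{prop:finitelymany}. A weakly holomorphic form of weight $2-k\le 0$ (for $\phi_n$), resp.\ weight $k$ and cuspidal away from $\infty$ (for $F_m$), with a pole of order at most $n$ at $\infty$ is a section of a line bundle on $X_0(N)$ whose degree grows linearly with $n$; by Riemann--Roch together with the valence formula (which also accounts for the elliptic points appearing in the dimension formula of Section~\ref{k, N list}) the space of such sections grows in dimension by one as $n$ increases by one, except at the finitely many ``gaps'' dictated by the genus and, in weight $2$, by the residue theorem. Running this for $M_{2-k}^\infty(\Gamma_0(N))$ produces, for every $n\ge 2$, a form of the stated shape for $\phi_n$ after subtracting suitable multiples of $\phi_{n-1},\dots,\phi_2$ and, when $k=2$, a constant to clear the constant term; the $q^{-1}$ coefficient (and, when $k=4$, also the $q^0$ coefficient) cannot be removed, because there is no form whose pole has the corresponding small order, which is exactly why the stated shape includes those terms. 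Running the count for $S_k^\infty(\Gamma_0(N))$ — invoking Lemma~\ref{lem:constanttermsvanish} when $k=2$ to force the constant term to vanish — produces $F_m$ of the stated shape for the indicated $m$. Uniqueness of $F_m$ is immediate: a difference of two candidates is holomorphic at $\infty$ with vanishing constant term and vanishing $q^1$-coefficient, hence a cusp form in $S_k(\Gamma_0(N))=\CC g$ whose $q^1$-coefficient is $0$, hence $0$. That one may take $\phi_n,F_m\in\ZZ(\!(q)\!)$ follows by carrying out the elimination over $\ZZ$ starting from an integral spanning set, which for these small levels can be built from integral eta-quotients (and a Hauptmodul when $N=9$); this is the step that needs genuine care.

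For (c): since $\gcd(p^n,N)=1$, the operator $T_k(p^n)$ preserves $S_k^\infty(\Gamma_0(N))$, so $F\mid T_k(p^n)\in S_k^\infty(\Gamma_0(N))$. Using the Fourier-expansion identity $F\mid T_k(p^n)=\sum_{j=0}^{n}p^{(k-1)j}\,F\mid U(p^{n-j})\mid V(p^j)$ together with the fact that the coefficients $C(r)$ of $F=F_1$ vanish for $r<-1$ and for $r=0,1$, one checks that for $j<n$ the operator $U(p^{n-j})$ produces no pole: the only polar term of $F$ is $-q^{-1}$, and $p^{n-j}r=-1$ has no integer solution since $p^{n-j}>1$, so $F\mid U(p^{n-j})$, and hence $F\mid U(p^{n-j})\mid V(p^j)$, is holomorphic at $\infty$. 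Thus the entire principal part of $F\mid T_k(p^n)$ comes from the $j=n$ term $p^{(k-1)n}F\mid V(p^n)=p^{(k-1)n}\sum_r C(r)q^{p^n r}$, whose only polar term is $-p^{(k-1)n}q^{-p^n}$. Since $F_{p^n}=-q^{-p^n}+\sum_{r\ge 2}C_{p^n}(r)q^r$, the form $F\mid T_k(p^n)-p^{(k-1)n}F_{p^n}$ has no pole at $\infty$, and its constant term equals $\big(\sum_{d\mid p^n}d^{k-1}\big)C(0)=0$, so it is a holomorphic cusp form, i.e.\ an element of $S_k(\Gamma_0(N))=\CC g$, say $c\,g$. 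Comparing $q^1$-coefficients, where $[F\mid T_k(p^n)]_1=C(p^n)$, $[F_{p^n}]_1=0$, and $[g]_1=1$, gives $c=C(p^n)$, which is the asserted identity. The case $n=0$ is the tautology $F\mid T_k(1)=F=F_1$.

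The manipulation in (c) is routine once (a) and (b) are available, so I expect the real obstacle to lie in (a)--(b): the Riemann--Roch argument delivers the $\phi_n$ and $F_m$ over $\QQ$ with the prescribed principal parts, but proving that the canonically normalized representatives actually lie in $\ZZ(\!(q)\!)$ requires exhibiting an integral structure on each of the five spaces and descending the echelon basis to $\ZZ$, which must be verified level by level. A secondary point is pinning down the gap structure correctly for each level, since the weight-two levels $N=27,32,36,49$ (where $m=0$ is excluded for $F_m$) and the weight-four level $N=9$ behave slightly differently.
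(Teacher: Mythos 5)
Your part (c) is correct and is essentially the paper's argument: one computes that the principal part of $F\mid T_k(p^n)$ is exactly $-p^{(k-1)n}q^{-p^n}$, subtracts $p^{(k-1)n}F_{p^n}$, observes the difference lies in $S_k(\Gamma_0(N))=\CC g$, and reads off the constant $C(p^n)$ from the $q$-coefficient. Your uniqueness argument in (b) is also the paper's. The problem is the existence part of (a)--(b).

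There the proposition asserts $\phi_n, F_m\in\ZZ(\!(q)\!)$, and this integrality is precisely the step you defer (``needs genuine care,'' ``must be verified level by level'') without carrying out. That is a genuine gap, not a loose end: the integrality of the $A_n(m)$ and $C_m(n)$ is what the rest of the paper runs on, since Propositions \ref{Cong 1} and \ref{Cong 2} and the filtration argument require ($p$-)integral coefficients, and your Riemann--Roch count only produces the forms with $\QQ$-coefficients. Nor is the repair you gesture at automatic: ``elimination over $\ZZ$ from an integral spanning set'' keeps integrality only if one exhibits generators whose leading coefficients are $\pm1$, and this must actually be done for each of the five levels. The paper's proof is exactly that explicit construction, which makes integrality free: for each level it writes down monic integral eta-quotients $\phi_2,\phi_3$ (weight $0$), or at level $9$ a weight $-2$ eta-quotient $\phi_2$ together with a Hauptmodul $L$, builds every $\phi_n$ as an integer polynomial combination of these with leading coefficient $1$, and builds every $F_m$ from $-g\,\phi_{m+1}$ (resp.\ $-g\,L^{m+1}$) corrected by integer combinations of earlier $F_r$, with Lemma \ref{lem:constanttermsvanish} killing the constant term. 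At level $49$ even this is not routine: no eta-quotient supplies a pole of order $3$ at $\infty$, and the paper takes $\phi_3:=\phi_2\mid T_0(2)-\tfrac{1}{2}\phi_2^2+\phi_2$, where the half-integral contributions must be checked to cancel. So the ``level by level'' verification you postpone is the actual content of (a)--(b); as written, your proposal proves the proposition only over $\QQ$.
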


\begin{proof}
\begin{enumerate}[(a)]
\item For $(k,N)=(2,32),$ note that the modular functions $\phi_{n}\in M_0^\infty(\Gamma_0(32))\cap \ZZ(\!(q)\!)$ are constructed by A. El-Guindy and K. Ono in \cite[Lemma 2.3]{El-Guindy-Ono}. We will construct the $\phi_n$ similarly for $(k,N)\in \{(2,27), (2,36), (2,49)\}.$ First, we explicitly define $\phi_{2}$ and $\phi_{3}$ in those cases as in Table \ref{tab:prop2a}.

\begin{table}[h]
\centering
\begin{tabular}{|c|c|} \hline
$(k,N)$ & $\phi_2, \phi_3\in M_{2-k}^\infty(\Gamma_0(N))$ \\ \hline \hline
\multirow{2}{*}{$(2,27)$} & $\displaystyle\phi_2 := \frac{\eta^{4}(9z)}{\eta(3z)\eta^{3}(27z)} = q^{-2} + q + 2q^{4} - q^7 + \cdots$\\
& $\displaystyle\phi_3 := \frac{\eta^{3}(3z)}{\eta^{3}(27z)} + 3 = q^{-3} + 5q^{6} + \cdots$\\ \hline
\multirow{2}{*}{$(2,32)$} & $\displaystyle\phi_2 := \frac{\eta^6(16z)}{\eta^2(8z)\eta^4(32z)} = q^{-2} + 2q^6 + \cdots$\\
&$\displaystyle \phi_3 := \frac{\eta^4(8z)\eta^2(16z)}{\eta^2(4z)\eta^4(32z)} = q^{-3} + 2q + q^5 + 2q^9+\cdots$\\ \hline
\multirow{2}{*}{$(2,36)$} & $\displaystyle\phi_2 := \frac{\eta(12z)\eta^3(18z)}{\eta(6z)\eta^3(36z)} = q^{-2} + q^4 + \cdots$\\
&$\displaystyle \phi_3 := \frac{\eta^3(9z)\eta(12z)}{\eta(3z)\eta^3(36z)} -1 = q^{-3} + 2q^3 + q^9 + \cdots$\\ \hline
\multirow{2}{*}{$(2,49)$} & $\displaystyle \phi_2 := \frac{\eta(z)}{\eta(49z)} + 1 = q^{-2} - q^{-1} + q^3 + q^5 + \cdots$\\
& $\displaystyle\phi_3 := \phi_2 \mid T_0(2) - \frac{1}{2}\phi_2^2+ \phi_2 = q^{-3} - q + q^2 + q^4 - q^9 + \cdots$\\
\hline \end{tabular}
\caption{Defining $\phi_2, \phi_3\in M_{2-k}^\infty(\Gamma_0(N))$ when $k=2$}
\label{tab:prop2a}
\end{table}
 
Note that each eta-quotient defined here is an element of $M_{0}^\infty(\Gamma_0(N))$ (see, for example, \cite[Theorems 1.64 and 1.65]{Ono}). Then, the modular functions $\phi_{4}, \phi_{5}, \ldots$ can be defined inductively as the appropriate polynomials in $\phi_{2}, \phi_{3}$ to obtain the desired principal parts.

For example, when $(k,N)=(2,49),$ we have
\begin{align*}
\phi_4 &:= \phi_2^2 + 2\phi_3 - \phi_2 = q^{-4} + q^{-1} + q^3 - q^5 + q^6  + \cdots\\
\phi_5 &:= \phi_3\phi_2 + \phi_2^2 + 2\phi_3 - \phi_2 -3 = q^{-5} - q + 2q^2 - q^4 + q^8 + 2q^9 + \cdots\\
\phi_6 &:= \phi_2^3 + 3\phi_3\phi_2 + \phi_3 - 3 = q^{-6} + 2q + q^2 + q^4 + 3q^8 + \cdots,
\end{align*}
etc. This completes the proof of (a) when $k=2.$

If $(k,N)=(4,9),$ note that we cannot define $\phi_n\in M_{-2}^\infty(\Gamma_0(9))\cap \ZZ(\!(q)\!)$ in quite the same way because they are not modular functions.  Instead, we follow the proof of Lemma 4.3 of \cite{Ahlgren-Samart} (with the appropriate modifications to consider all positive integers $n\geq 2$).  Set
\begin{align*}
\phi_2 &:= \frac{\eta^2(3z)}{\eta^6(9z)} = q^{-2} - 2q - q^4 - 8q^7 + \cdots \in M_{-2}^\infty(\Gamma_0(9))\cap \ZZ(\!(q)\!) \\
L &:= \frac{\eta^3(z)}{\eta^3(9z)}+3 = q^{-1} + 5q^2 - 7q^5 + 3q^8 + \cdots \in M_{0}^\infty(\Gamma_0(9))\cap \ZZ(\!(q)\!).
\end{align*}
Then, the modular forms $\phi_3, \phi_4, \ldots$ can be defined inductively as the appropriate integer linear combinations of $\phi_2, \phi_2L, \phi_2L^2, \ldots$ to obtain the desired principal parts. For example, we have that
\begin{align*}
\phi_3 &:= \phi_2L = q^{-3} + 3 - 18q^3 + 20q^6 + 45q^9+ \cdots\\
\phi_4 &:= \phi_2L^2 = q^{-4} + 8q^{-1} - 10q^2 - 88q^5 + 295q^8+ \cdots\\
\phi_5 &:= \phi_2L^3 - 13\phi_2 = q^{-5} + 49q - 178q^4 - 140q^7 + \cdots,
\end{align*}
etc.

\item We first build the $F_{m}\in S_{2}^\infty(\Gamma_0(N))\cap \ZZ(\!(q)\!)$ for all of the weight 2 spaces. In each such case, set $F_{-1} := -g$. Then, to inductively construct the $F_{m}$ for $m \geq 1$ we consider $F_{-1}\phi_{m+1}\in S_{2}^\infty(\Gamma_0(N))\cap \ZZ(\!(q)\!)$. Note that by Lemma \ref{lem:constanttermsvanish} the constant coefficient of $F_{-1}\phi_{m+1}$ is zero. Inductively, we may add the appropriate linear combinations of the previous $F_{r}$, for $r < m$, to get rid of the undesired terms in the principal part of $F_{-1}\phi_{m+1}$. For example, when $(k, N) = (2, 49)$ we have 
    \begin{align*}
        F_{1} &:= F_{-1}\phi_{2} + F_{-1}= -q^{-1} - q^3 - q^5 + 2q^6+\cdots \\
        F_{2} &:= F_{-1}\phi_{3} - F_{1} + F_{-1} = -q^{-2} + q^{3} + 2q^5 + q^6+\cdots \\
        F_{3} &:= F_{-1}\phi_{4} - F_{2} - F_{-1} = -q^{-3} + q^{2} + q^4 - q^8 - 4q^9 +\cdots \\
        F_{4} &:= F_{-1}\phi_{5} - F_{3} + F_{1} = -q^{-4} + q^{3} - q^5 + q^6 +\cdots,
    \end{align*}
etc.

When $(k, N) = (4, 9)$, we again must adjust our argument slightly since the modular forms $\phi_{m}$ have weight $-2$. First, set $F_{-1} := -g.$ Then, the $F_{m}$ for $m\geq 0$ can be constructed inductively by adding the appropriate linear combinations of the previous $F_{r}$, for $r < m$, to $F_{-1}L^{m+1}$.

Uniqueness is proved for all five spaces simultaneously: if $G_{m} = q^{-m} + O(q^{2}) \in S_{k}^{\infty}(\Gamma_{0}(N))$ then $F_{m} - G_{m} = O(q^{2}) \in S_{k}(\Gamma_{0}(N))$. Since the CM form $g = q + O(q^{2})$ spans the one-dimensional space $S_{k}(\Gamma_{0}(N))$, we must have $F_{m} = G_{m}$. 

\item First observe that
\[F \mid T_{k}(p^{n}) = -p^{n(k-1)}q^{-p^{n}} + C(p^{n})g + O(q^{2}).\]
Hence
\[F \mid T_{k}(p^{n}) - p^{n(k-1)}F_{p^{n}} = C(p^{n})g + O(q^{2}) = C(p^{n})g,\]
the last equality following from $\dim S_{k}(\Gamma_{0}(N)) = 1$. This gives the desired identity.

\end{enumerate}
\end{proof}

We now prove Theorem \ref{thm:main2}.
\begin{proof}[Proof of Theorem \ref{thm:main2}]
Note that for $m,n$ as given above, we have that $F_m\phi_n \in S_2^\infty(\Gamma_0(N)).$  By Lemma \ref{lem:constanttermsvanish}, it follows that $F_m\phi_n$ has constant term
\[0 = C_m(n) - A_n(m),\]
as desired.
\end{proof}

\begin{rem}
Note that Theorem \ref{thm:main2} guarantees that the $\phi_n$ are uniquely defined, since the $F_m$ are unique.
\end{rem}

\begin{rem} \label{A(-1) = 0}
By Theorem \ref{thm:main2} we have that
\[A_n(-1) = C_{-1}(n) = a(n).\]
Thus, if $p$ is a prime which is inert in the field of complex multiplication then $A_p(-1)=0.$
\end{rem}

\section{Congruence results for $C(p^{2m+1})$} \label{sec:congs}

\begin{prop} \label{Cong 1}
For each of the spaces $S_{k}(\Gamma_{0}(N))$ listed in Proposition \ref{prop:finitelymany}, if $p$ is an inert prime in the field of complex multiplication for the corresponding newform $g$ and $m\geq 0$ is an integer, we have
\[C(p^{2m+1}) \equiv (-1)^{m}p^{(k-1)m}C(p) \Mod{p^{(k-1)(m+1)}}.\]
\end{prop}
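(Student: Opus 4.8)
The plan is to exploit the Hecke recursion for the CM form $g$ together with the identity from Proposition \ref{prop:1}(c) relating $F\mid T_k(p^n)$ to $F_{p^n}$ and $g$. First recall that since $g$ is a normalized newform spanning $S_k(\Gamma_0(N))$, its coefficients satisfy $a(p^{n+1}) = a(p)a(p^n) - p^{k-1}\chi(p)a(p^{n-1})$; because $p$ is inert in the CM field, $a(p) = 0$, so $a(p^{2m}) = (-1)^m p^{(k-1)m}$ and $a(p^{2m+1}) = 0$ for all $m\ge 0$ (here one uses $p\nmid N$, which holds for inert primes in each of the five cases, so that $\chi(p)=1$). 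The strategy is then to extract the coefficient of $q$ (equivalently, compare with $g = q + O(q^2)$) on both sides of the $T_k(p^n)$ identity and, separately, to read off $C(p^n)$ from the principal part, so that the arithmetic of the $C(p^n)$ is governed by that of the $a(p^n)$.

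Concretely, I would proceed as follows. From Proposition \ref{prop:1}(c) with $n = 2m+1$,
\[
F\mid T_k(p^{2m+1}) = p^{(k-1)(2m+1)}F_{p^{2m+1}} + C(p^{2m+1})g.
\]
On the other hand, expanding $T_k(p^{2m+1})$ via the standard formula $F\mid T_k(p^n) = \sum_{j=0}^n p^{(k-1)j}\, F\mid U(p^{n-j})\mid V(p^j)$ and examining which terms can contribute to the coefficient of $q$ in $F\mid T_k(p^{2m+1})$: the $V(p^j)$ with $j\ge 1$ kills the $q$-coefficient, so only $j=0$ survives, giving that the coefficient of $q$ in $F\mid T_k(p^{2m+1})$ equals $C(p^{2m+1})$. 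Matching this against the right-hand side — where $F_{p^{2m+1}} = -q^{-p^{2m+1}} + O(q^2)$ contributes nothing to the $q$-coefficient and $g$ contributes $1$ — one recovers the trivial identity $C(p^{2m+1}) = C(p^{2m+1})$, so this alone is not enough. The real input must come from iterating the Hecke relation at the level of the weakly holomorphic forms: one shows, using that $T_k(p)$ acts on the finite-dimensional obstruction and that $g$ is a Hecke eigenform with $T_k(p)g = a(p)g = 0$, a recursion of the shape
\[
C(p^{n+1}) = C(p)\,C(p^n) - p^{k-1}C(p^{n-1})
\]
modulo suitable powers of $p$, or directly that $C(p^{2m+1}) \equiv (-1)^m p^{(k-1)m} C(p) \pmod{p^{(k-1)(m+1)}}$ by induction on $m$: apply $T_k(p^2) = T_k(p)^2 - p^{k-1}$ (valid since $p\nmid N$ and the Nebentypus is trivial), note $F_{p^{2m+1}}\mid$ adjustments stay integral, and peel off one factor of $p^{k-1}$ at each step, with the cross terms absorbed into the error because $F_{p^j}$ has $p$-integral coefficients and $g$ has $a(p^{2j+1})=0$.

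The base case $m=0$ is the trivial $C(p)\equiv C(p)\pmod{p^{k-1}}$. For the inductive step, I would write $T_k(p^{2m+1}) = T_k(p^{2m-1})T_k(p^2) - p^{k-1}T_k(p^{2m-1})$ (the Hecke algebra relation at a prime $p\nmid N$), apply it to $F$, and use Proposition \ref{prop:1}(c) on each term to express everything in terms of $p^{(k-1)j}F_{p^j}$ plus multiples of $g$. Since $g$ is killed by any further action of the relevant operators up to the eigenvalue $a(p)=0$, the coefficients $C(p^{2m+1})$ inherit the recursion satisfied by $a(p^{2m+1})$ with errors controlled by the $p$-integrality of the $F_{p^j}$; tracking the powers of $p$ carefully yields the stated congruence modulo $p^{(k-1)(m+1)}$.

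\textbf{Main obstacle.} The delicate point is bookkeeping the $p$-adic valuations when converting the Hecke-algebra identity into a congruence: one must be sure that the "error" contributions — coming from the $C(p^j)g$ summands in Proposition \ref{prop:1}(c) and from the comparison of principal parts — are divisible by exactly the claimed power $p^{(k-1)(m+1)}$ and not something weaker. This requires knowing (presumably from the earlier congruence results, or from a filtration argument via Proposition \ref{filt}) that $v_p(C(p)) \ge$ something like $0$ together with a matching lower bound propagated through the induction; the interplay between the exact valuation statement \eqref{Thm1.a} and this congruence suggests they may need to be proved together, or that Proposition \ref{Cong 2} (referenced in Remark \ref{rem on p=2}) supplies the missing divisibility of $C(p)$ that makes the induction close cleanly.
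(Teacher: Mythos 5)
Your proposal correctly locates the main ingredients (Proposition \ref{prop:1}(c), the Hecke relations at $p\nmid N$, and $a(p)=0$ for inert $p$), but as written it has a genuine gap exactly where the proposition's content lies. You never establish the recursion you need: the guessed relation $C(p^{n+1}) = C(p)C(p^n) - p^{k-1}C(p^{n-1})$ ``modulo suitable powers of $p$'' is asserted, not derived, and it is not what actually falls out of the setup. (What does fall out, if you apply $T_k(p)$ to the identity $F\mid T_k(p^n) = p^{(k-1)n}F_{p^n} + C(p^n)g$, use $g\mid T_k(p)=a(p)g=0$, the relation $T_k(p^n)T_k(p)=T_k(p^{n+1})+p^{k-1}T_k(p^{n-1})$, and the decomposition $F_{p^n}\mid T_k(p) = p^{k-1}F_{p^{n+1}} + F_{p^{n-1}} + C_{p^n}(p)g$, is the exact identity $C(p^{n+1}) + p^{k-1}C(p^{n-1}) = p^{(k-1)n}C_{p^n}(p)$; since $C_{p^n}(p)\in\ZZ$, induction on $m$ then gives the stated congruence, because the error term $p^{(k-1)2m}C_{p^{2m}}(p)$ has valuation at least $(k-1)(m+1)$ for $m\geq 1$.) In addition, the Hecke-algebra identity you invoke for the inductive step, $T_k(p^{2m+1}) = T_k(p^{2m-1})T_k(p^2) - p^{k-1}T_k(p^{2m-1})$, is wrong: the correct expansion has an extra term $-p^{2(k-1)}T_k(p^{2m-3})$, whose contribution $p^{2(k-1)}C(p^{2m-3})g$ has valuation only $(k-1)m$ and so cannot be discarded at the modulus $p^{(k-1)(m+1)}$. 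Finally, your closing worry is misplaced: Proposition \ref{Cong 2} (that $p\nmid C(p)$) is not needed for this congruence at all; it is only used later to upgrade the congruence to the exact valuation \eqref{Thm1.a}.

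For comparison, the paper's proof avoids Hecke recursions entirely and runs through the $\Theta$-operator: since $p$ is inert, Remark \ref{A(-1) = 0} gives $A_p(-1)=a(p)=0$, so $\Theta^{k-1}(\phi_p)\in S_k^\infty(\Gamma_0(N))$ has principal part $-p^{k-1}q^{-p}$ and $q$-coefficient $A_p(1)=C(p)$ (by duality), which forces $F\mid T_k(p) = \Theta^{k-1}(\phi_p)$ because the space of possible discrepancies is spanned by $g$ and the $q$-coefficients already match. Rewriting this as $F\mid U(p) = \Theta^{k-1}(\phi_p) - p^{k-1}F\mid V(p)$ and iterating with $U(p^2)$, the congruence follows from the elementary observation that $\Theta^{k-1}(\phi_p)\mid U(p^{2j}) \equiv 0 \Mod{p^{2j(k-1)}}$, since $\Theta^{k-1}$ multiplies the $n$th coefficient by $n^{k-1}$. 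That divisibility is the $p$-adic engine of the proof, and it (or a substitute such as the exact recursion above) is precisely what your write-up leaves unproved.
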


\begin{proof}
By Remark \ref{A(-1) = 0}, if $p$ is an inert prime in the CM field then $\phi_{p}$ has the form $\phi_{p} = q^{-p} + \sum_{n\geq 1} A_{p}(n)q^{n}$. Theorem \ref{thm:main2} implies that $A_{p}(1) = C(p)$. We thus have
\[\Theta^{k-1}(\phi_{p}) = -p^{k-1}q^{-p} + C(p)q + O(q^{2}) \in S_{k}^{\infty}(\Gamma_{0}(N)).\]
On the other hand, part (c) of Proposition \ref{prop:1} gives
        \[F \mid T_{k}(p) = -p^{k-1}q^{-p} + C(p)q + O(q^{2}) \in S_{k}^{\infty}(\Gamma_{0}(N)),\]
and so 
        $$F \mid T_{k}(p) = \Theta^{k-1}(\phi_{p}).$$
That is, 
        $$F \mid U(p) = \Theta^{k-1}(\phi_{p}) - p^{k-1}F \mid V(p).$$
Applying $U(p^{2})$ to both sides and arguing inductively, we obtain for each $m \geq 0$
        $$F \mid U(p^{2m+1}) = \sum_{j=0}^{m} (-1)^{m-j}p^{(k-1)(m-j)}\Theta^{k-1}(\phi_{p}) \mid U(p^{2j}) + (-1)^{m+1}p^{(k-1)(m+1)}F \mid V(p).$$
For any $j \geq 0$ we have $\Theta^{k-1}(\phi_{p}) \mid U(p^{2j}) \equiv 0 \Mod{p^{(k-1)2j}}$. Hence for $m \geq 0$,
        $$F \mid U(p^{2m+1}) \equiv (-1)^{m}p^{(k-1)m}\Theta^{k-1}(\phi_{p}) \Mod{p^{(k-1)(m+1)}}.$$
The result follows by comparing coefficients of $q$ above. 
\end{proof}

\begin{prop} \label{Cong 2}
For each space $S_{k}(\Gamma_{0}(N))$ listed in Proposition \ref{prop:finitelymany}, if $p$ is an odd, inert prime in the field of complex multiplication for the corresponding newform $g$, then $p \nmid C(p)$. 
\end{prop}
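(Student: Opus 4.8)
The plan is to leverage the filtration machinery (Proposition \ref{filt}) together with the congruence from Proposition \ref{Cong 1}, reducing the claim $p \nmid C(p)$ to a finite check on the five explicit spaces. Since $p$ is an odd prime inert in the CM field $K = \QQ(\sqrt{D})$, we know $p \nmid N$ in each of the five cases of Proposition \ref{prop:finitelymany} provided also $p \nmid 6N$ — the primes $p = 3$ dividing the relevant levels must be treated separately, but $3$ is ramified or split in each of the CM fields occurring (for $\QQ(\sqrt{-3})$ the prime $3$ is ramified, for $\QQ(\sqrt{-4})$ one has $p$ odd rules out $2$, for $\QQ(\sqrt{-7})$ the prime $3$ is split since $-7 \equiv 2 \pmod 3$ is a square mod $3$), so in fact an odd inert prime automatically satisfies $p \nmid 6N$. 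This lets us work with the filtration $w_p$ on $M_k^{(p)}(\Gamma_0(N))$.

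The key idea: suppose for contradiction that $p \mid C(p)$. By Remark \ref{A(-1) = 0}, since $p$ is inert, $A_p(-1) = a(p) = 0$, so $\phi_p = q^{-p} + \sum_{n \geq 1} A_p(n) q^n$ with $A_p(1) = C(p)$ by Theorem \ref{thm:main2}. The natural object to study is $\Theta^{k-1}(\phi_p) \in S_k^\infty(\Gamma_0(N))$; from the proof of Proposition \ref{Cong 1} this equals $F \mid T_k(p)$. If $p \mid C(p)$, then working modulo $p$ this form has $q$-expansion $-p^{k-1} q^{-p} + O(q^2) \equiv O(q^2) \pmod p$ on the nonnegative part, but it still has a pole at $\infty$. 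I would analyze $\Theta^{k-1}(\phi_p)$ modulo $p$: since $\Theta$ raises filtration in a controlled way and $\Theta^{k-1}$ of a form of weight $2-k$ lands in weight $k$, one obtains constraints from $w_p(\Theta^{k-1}\phi_p) \equiv k \pmod{p-1}$ combined with the $V(p)$-filtration identity $w_p(f\mid V(p)) = p\, w_p(f)$. The relation $F\mid U(p) = \Theta^{k-1}(\phi_p) - p^{k-1} F\mid V(p)$ shows $F \mid U(p) \equiv \Theta^{k-1}(\phi_p) \pmod p$, and iterating as in Proposition \ref{Cong 1} I expect to derive that $g \equiv (\text{something divisible by } p) \pmod{p}$ or that $g$ lies in too small a filtration, contradicting that $g$ is a nonzero normalized cusp form (whose leading coefficient is $1$, hence nonzero mod $p$).

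Concretely, the cleanest route is probably: from Proposition \ref{Cong 1} with the roles reversed, or directly, one shows $C(p) \equiv (\text{trace-of-Frobenius-type quantity}) \pmod p$ and uses the CM structure — for a CM form, $a(p) = 0$ at inert $p$, and the Hecke recursion forces $C(p^2) \equiv -p^{k-1} \pmod{\text{higher power}}$ via $a(p^2) = -p^{k-1}$ (the eigenvalue relation $a(p^2) = a(p)^2 - p^{k-1} a(1) = -p^{k-1}$). The value $C(p)$ should be pinned down modulo $p$ by an explicit modular form identity in each of the five cases — e.g., $C(p) = A_p(1)$ is a coefficient of an explicitly constructed weakly holomorphic form, and one checks it is a unit mod $p$ by exhibiting, in each case, a congruence $F \equiv (\text{power of a fixed generator}) \pmod p$ forcing non-vanishing. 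The main obstacle I anticipate is that there is no single uniform argument: the filtration bound gives $w_p$-constraints that are only strong enough when $p$ is small relative to $k$ and $N$, so the proof will likely split into (i) a general filtration argument handling all sufficiently large $p$, and (ii) a finite computer check for the finitely many small odd inert primes in each of the five fixed spaces. Getting step (i) to actually close — i.e., showing the filtration of $\Theta^{k-1}\phi_p$ cannot drop in the way required by $p \mid C(p)$ — is the delicate point, since it requires understanding $\Theta$ acting near the pole at $\infty$, not just on holomorphic forms.
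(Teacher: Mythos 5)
Your opening move matches the paper's: assume $p \mid C(p)$, so $F \mid T_{k}(p) = p^{k-1}F_{p} + C(p)g \equiv 0 \Mod{p}$, hence $\Theta^{k-1}(\phi_{p}) \equiv 0 \Mod{p}$ and $\phi_{p} \equiv q^{-p} + \sum_{n\geq 1} A_{p}(np)q^{np} \Mod{p}$. But the decisive idea is missing. You correctly observe that the filtration machinery of Proposition \ref{filt} applies to holomorphic forms and not directly to the weakly holomorphic $\Theta^{k-1}(\phi_{p})$, and you leave that obstacle unresolved, falling back on a speculative plan (``explicit congruence pinning down $C(p)$ in each case, filtration for large $p$, computer check for small $p$'') that is not carried out. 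The paper's resolution is to kill the pole: for each of $N=27,36,49$ (the cases $(2,32)$ and $(4,9)$ are simply quoted from Ahlgren--Samart) one exhibits an explicit holomorphic form $f$ --- in $M_{2}(\Gamma_0(27))$ of order $6$ at $\infty$, in $M_{2}(\Gamma_0(36))$ of order $12$, in $M_{8}(\Gamma_0(49))$ of order $29$ --- so that $h_{p} := \phi_{p}f^{p}$ is holomorphic and, modulo $p$, is supported on exponents divisible by $p$. Then $h_{p} \equiv h_{p}\mid U(p) \mid V(p) \Mod{p}$, and Proposition \ref{filt} forces $w_{p}(h_{p}) = p\,w_{p}(h_{p}\mid U(p))$; combined with $w_{p}(h_{p}) \equiv 2p$ (resp.\ $8p$) $\Mod{p-1}$ and $p \mid w_{p}(h_{p})$ one gets $w_{p}(h_{p}\mid U(p)) = 2$ (resp.\ $8$), so $h_{p}\mid U(p) = q^{5}+O(q^{6})$ (resp.\ $q^{11}+\cdots$, $q^{28}+\cdots$) would be congruent to an element of the explicit finite-dimensional space $M_{2}(\Gamma_0(N))$ (resp.\ $M_{8}(\Gamma_0(49))$), and no such element exists. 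This is uniform in $p$; no large/small prime dichotomy or per-prime computation is needed.

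A second concrete error: your claim that an odd inert prime automatically satisfies $p \nmid 6N$ is false. For $N=49$ the CM field is $\QQ(\sqrt{-7})$ and $3$ is inert there ($-7 \equiv 2 \Mod{3}$ is a nonsquare modulo $3$, contrary to your assertion that it is a square), while $3 \mid 6\cdot 49$. So the filtration argument is unavailable at $(N,p)=(49,3)$, and the paper disposes of this case by the direct computation $C(3) = -1$. Your proposal, as written, would silently skip this case.
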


\begin{proof}
When $N \in \{9, 32\}$ we refer the reader to \cite[Lemma 3.3, Lemma 4.4]{Ahlgren-Samart}. For $N \in \{27, 36, 49\}$ we argue analogously. For each such $N$, suppose $p \mid C(p)$. Proposition \ref{prop:1}(c) asserts that 
        $$\Theta(\phi_{p}) = F \mid T_{2}(p) = pF_{p} + C(p)g \equiv 0 \Mod{p}.$$
Thus we have
        $$\phi_{p} \equiv q^{-p} + \sum_{n=1}^{\infty} A_{p}(np)q^{np} \Mod{p}.$$
We now consider each case separately.

First let $N = 27$. By using Sage (for example) to inspect a basis for $M_{2}(\Gamma_{0}(27)),$ one can see that there exists an element
	$$f(z) := \frac{\eta^6(27z)}{\eta^{2}(9z)} = q^{6} + 2q^{15} + O(q^{24}) \in M_{2}(\Gamma_{0}(27)).$$
Then $f^{p} \in M_{2p}(\Gamma_{0}(27))$ has the form
	$$f^{p} \equiv \sum_{n=6}^{\infty} B_{p}(np)q^{np} \equiv q^{6p} + \cdots \Mod{p}.$$
As $\phi_{p} \in M_{0}^{\infty}(\Gamma_{0}(27))$, we find that $h_{p}:=\phi_{p}f^{p} \in M_{2p}(\Gamma_{0}(27))$ has the form
	$$h_{p} \equiv \sum_{n=5}^{\infty} D_{p}(np)q^{np} \equiv q^{5p} + \cdots \Mod{p}.$$
Hence
	$$h_{p} \equiv h_{p} \mid U(p) \mid V(p) \Mod{p}.$$
By Proposition \ref{filt}, we get
	$$w_{p}(h_{p}) = p w_{p}(h_{p} \mid U(p)).$$
As $w_{p}(h_{p}) \equiv 2p \Mod{p-1}$ by Proposition \ref{filt} and $p \mid w_{p}(h_{p})$, we have that $w_{p}(h_{p}) = 2p$, so that $w_{p}(h_{p} \mid U(p)) = 2$. So there exists $h_{0} \in M_{2}^{(p)}(\Gamma_{0}(27))$ such that
	$$h_{0} \equiv h_{p} \mid U(p) = q^{5} + O(q^{6}) \Mod{p}.$$
But by examining a basis for the six-dimensional space $M_{2}(\Gamma_{0}(27))$, we find that there is no such $h_{0}$ of this form. This completes the proof of the claim for $N=27.$ 

Now let $N = 36$. Following a similar argument as above, define
        \[f(z) = q^{12} - 2q^{18} + 3q^{24} + O(q^{30}) \in M_{2}(\Gamma_{0}(36)).\]
Then $f^{p} \in M_{2p}(\Gamma_{0}(36))$ has the form
        \[f^{p} \equiv \sum_{n = 12}^{\infty} B_{p}(np)q^{np} \equiv q^{12p} + \cdots \Mod{p}.\]
Since $\phi_{p} \in M_{0}^{\infty}(\Gamma_{0}(36))$, we have that $h_{p} := \phi_{p}f^{p} \in M_{2p}(\Gamma_{0}(36))$ and has the form 
        \[h_{p} \equiv \sum_{n = 11}^{\infty} D_{p}(np)q^{np} \equiv q^{11p} + \cdots \Mod{p}.\]
Then $h_{p} \mid U(p) \mid V(p) \equiv h_{p} \Mod{p}$. By Proposition \ref{filt}, we get
        \[w_{p}(h_{p}) = pw_{p}(h_{p}\mid U(p)).\]
Since $w_{p}(h_{p}) \equiv 2p \Mod{p-1}$ and $p \mid w_{p}(h_{p})$, we have $w_{p}(h_{p}) = 2p$. Hence $w_{p}(h_{p} \mid U(p)) = 2$. So there exists $h_{0} \in M_{2}^{(p)}(\Gamma_{0}(36))$ such that 
        $$h_{0} \equiv h_{p} \mid U(p) = q^{11} + O(q^{12}) \Mod{p}.$$
But examining a basis of $M_{2}(\Gamma_{0}(36))$, we find that no such form $h_{0}$ can exist. This completes the proof of the claim for $N=36.$ 

Let $N = 49$. If $p = 3$ then $3 \nmid C(3) = -1$ (see the last paragraph on filtrations in \S \ref{Background}; namely the condition $p \nmid 6N$). Let $p > 3$ be inert. There exists an element
	$$f(z) = q^{29} + q^{30} - q^{32} + O(q^{36}) \in M_{8}(\Gamma_{0}(49))$$
so that
	$$f^{p} \equiv \sum_{n=29}^{\infty} B_{p}(np)q^{np} \equiv q^{29p} + \cdots \Mod{p}.$$
Then $h_{p} := \phi_{p}f^{p} \in M_{8p}(\Gamma_{0}(49))$ has the form 
	$$h_{p} \equiv \sum_{n=28}^{\infty} D_{p}(pn)q^{pn} \equiv q^{28p} + \cdots \Mod{p}.$$
and so $h_{p} \equiv h_{p} \mid U(p) \mid V(p) \Mod{p}$. As before, Proposition \ref{filt} shows us that
	$$w_{p}(h_{p}) = pw_{p}(h_{p} \mid U(p)).$$
Since $w_{p}(h_{p}) \equiv 8p \Mod{p-1}$ and $p \mid w_{p}(h_{p})$, we have $w_{p}(h_{p}) = 8p - ap(p-1)$ for some integer $a \geq 0$. As $p > 9$, we must have $a = 0$. Hence $w_{p}(h_{p}) = 8p$, so that $w_{p}(h_{p} \mid U(p)) = 8$. Thus there exists $h_{0} \in M_{8}^{(p)}(\Gamma_{0}(49))$ such that
	$$h_{0} \equiv h_{p} \mid U(p) = q^{28} + O(q^{29}) \Mod{p}.$$
But by examining a basis for the $36$-dimensional space $M_{8}(\Gamma_{0}(49))$, we find that no such $h_{0}$ exists. This completes the proof.
\end{proof}

\section{Proof of Theorem \ref{thm:main1}} \label{sec:main1}

Equation \eqref{Thm1.a} of Theorem \ref{thm:main1} follows from Propositions \ref{Cong 1} and \ref{Cong 2}. To prove equation \eqref{Thm1.b}, note that from part (c) of Proposition \ref{prop:1} it follows that
    \begin{align} \label{fin}
	\frac{F \mid U(p^{2m+1})}{C(p^{2m+1})} - g = \frac{1}{C(p^{2m+1})} \left( p^{(k-1)(2m+1)}F_{p^{2m+1}} - \sum_{j=1}^{2m+1} p^{(k-1)j}F \mid U(p^{2m+1-j}) \mid V(p^{j}) \right).
	\end{align}
Observe
    $$F \mid T(p^{2m}) = \sum_{j=1}^{2m+1} p^{(k-1)(j-1)}F \mid U(p^{2m+1-j}) \mid V(p^{j-1})$$
and on the other hand
    $$F \mid T(p^{2m}) = p^{(k-1)2m}F_{p^{2m}} + C(p^{2m})g = p^{(k-1)2m}F_{p^{2m}}$$
by part (c) of Proposition \ref{prop:1} together with the fact that $C(p^{2m})=0$ since $p$ is inert. Therefore, (\ref{fin}) becomes
	\begin{align} \label{almost} 
	\frac{F \mid U(p^{2m+1})}{C(p^{2m+1})} - g =  \frac{1}{C(p^{2m+1})} \left( p^{(k-1)(2m+1)}F_{p^{2m+1}} -p^{(k-1)(2m+1)}F_{p^{2m}} \mid V(p) \right).
	\end{align}
By Proposition \ref{Cong 1}, this completes the proof.

\bibliographystyle{alpha}
\bibliography{refs}

\end{document}